\numberwithin{equation}{section}
\journal{Physica D: Nonlinear Phenomena}
\begin{document}

\newtheorem{definition}{Definition}
\newtheorem{lemma}{Lemma}
\newtheorem{remark}{Remark}
\newtheorem{theorem}{Theorem}
\newtheorem{proposition}{Proposition}
\newtheorem{assumption}{Assumption}
\newtheorem{example}{Example}
\newtheorem{corollary}{Corollary}
\def\e{\varepsilon}
\def\Rn{\mathbb{R}^{n}}
\def\Rm{\mathbb{R}^{m}}
\def\E{\mathbb{E}}
\def\hte{\bar\theta}
\def\cC{{\mathcal C}}
\numberwithin{equation}{section}

\begin{frontmatter}

\title{Effective Wave Factorization for a Stochastic Schr\"{o}dinger Equation}

\author{\bf\normalsize{
Ao Zhang\footnote{School of Mathematics and Statistics, \& Center for Mathematical Sciences,  Huazhong University of Sciences and Technology, Wuhan 430074,  China. Email: \texttt{zhangao1993@hust.edu.cn}},
Jinqiao Duan\footnote{Department of Applied Mathematics, Illinois Institute of Technology, Chicago, IL 60616, USA. Email: \texttt{duan@iit.edu}}
}}

\begin{abstract}
We study the homogenization of a stochastic Schr\"odinger equation with a large periodic potential in solid state physics. Denoting by $\varepsilon$ the period, the potential is scaled as $\varepsilon^{-2}$. Under a generic assumption on the spectral properties of the associated cell problem, we prove that the solution can be approximately factorized as the product of a fast oscillating cell eigenfunction and of a slowly varying solution of an effective equation. Our method is based on two-scale convergence and Bloch waves theory.
\end{abstract}

\begin{keyword}
Homogenization; stochastic Schr\"odinger equation; Multiplicative noise; two-scale convergence; Bloch wave; Variational solution.
\MSC[2010] 60H15; 35B27.
\end{keyword}

\end{frontmatter}

\section{Introduction}

There is a vast literature on periodic and quasi-periodic homogenization of partial differential equations. For nonlinear Schr\"odinger-type equations with semi-classical scaling and an additional highly oscillatory periodic potential, recently the rigorous study of the corresponding asymptotic regime $\varepsilon\to 0$, known as the semi-classical approximation, attracted lots of interest(see e.g. \cite{Di02, Po96, Bu87, Gu88}). The scaling of deterministic Schr\"odinger equation, which is different from the semi-classical scaling, is studied by Allaire and Piatnitski in \cite{Al04, Al05}. Contrasted with deterministic homogenization, very few results are available as regards the homogenization of stochastic partial differential equations(SPDEs) (see \cite{Wa07, WD07}). In some circumstances, randomness has to be taken into account and it often occurs through a random potential. So we consider the macroscopic potential under random perturbation, and study the homogenized problem of stochastic Schr\"odinger equation. To stochastic Schr\"odinger equation, Bouard and Debussche studied the properties of the solutions in the case of additive noise in \cite{de03}, multiplicative noise in \cite{de99} and white noise dispersion in \cite{de10}. Variational solutions of stochastic Schr\"odinger equations were studied in \cite{Ke15, Ke16, KD15}.

To prove the convergence to the homogenized problem of the stochastic Schr\"odinger equation, the main idea is to use Bloch wave theory to build adequate oscillating test functions and to pass to the limit using two-scale convergence. The method of Bloch waves \cite{Blo28}, or the Bloch transform, is a generalization of Fourier transform that leaves invariant periodic functions, for a modern treatment of this topic see \cite{Al04, Al05}. The method of two-scale convergence is a powerful tool for studying homogenization problems for partial differential equations with periodically oscillating coefficients.Two-scale convergence has been introduced by Nguetseng \cite{Ngu98} and Allaire \cite{Al92}. And the theory of the two scale convergence from the periodic to the stochastic setting has been extended by Bourgeat, Mikeli\'c and Wright in \cite{BMW94}, using techniques from ergodic theory. A striking advantage of the two-scale convergence method is that the homogenized and local problems appear directly as convergence results and do not have to be derived by tedious and somewhat dubious calculations. In practice, multiplying the global equation by an adequate test function and applying theorems yields both the local and the homogenized equations, and the proof of the convergence.

We study the homogenization of the following Schr\"odinger equation with white noise
\begin{equation}\label{Equ.1}
\begin{cases}
$$i\frac{\partial u_{\varepsilon}}{\partial t} - \frac{\partial}{\partial x}(\sigma(\frac{x}{\varepsilon})\frac{\partial u_{\varepsilon}}{\partial x} )+(\varepsilon^{-2}c(\frac{x}{\varepsilon})+d(x,\frac{x}{\varepsilon}))u_{\varepsilon}+g(t, \frac{x}{\varepsilon}, u_{\varepsilon})\frac{dW(t)}{dt}=0  \quad \text{in} \ D\times [0,T], \\
u_\varepsilon=0 \quad \text{on} \ \partial D\times [0,T], \\
u_\varepsilon(0,x)=u^0_{\varepsilon}(x)  \quad \text{in} \ D,$$
\end{cases}
\end{equation}
where $D\subset\mathbb{R}$ is an open set, $0<T<\infty$, and the unknown function $u_{\varepsilon}$ is complex-valued. The coefficients $\sigma(y)$, $c(y)$ and $d(x, y)$ are real and bounded functions defined for $x\in D$ and $y\in {\mathbb{T}}$ (the unit torus). The function $g$ is given different assumptions to get different results. Furthermore, the real-valued Wiener processes $W(t)$ is defined on the complete probability space $(\Omega, \mathcal{F}, \mathbb{P})$ endowed with the canonical filtration $(\mathcal{F}_t)_{t\in[0, T]}$.

We are interested in the behavior of the solution $u_{\varepsilon}(t, x, \omega)$ as $\varepsilon\to 0$. And we first introduce the Bloch or shifted cell problem,
\begin{equation}\nonumber
-(\frac{\partial}{\partial y}+2i\pi\theta)\left(\sigma(y)(\frac{\partial}{\partial y}+2i\pi\theta)\psi_n\right)+c(y)\psi_n=\lambda_n(\theta)\psi_n \quad \text{in}\ \mathbb{T},
\end{equation}
where $\theta\in\mathbb{T}$ is a parameter and $(\lambda_n(\theta), \psi_n(y, \theta))$ is the $n^{th}$ eigenpair. In physical terms, the range of $\lambda_n(\theta)$, as $\theta$ run in $\mathbb{T}$, is a Bloch or conduction band (also called Fermi surface). 
Under some assumptions, we focus on higher energy initial data (or excited states) and consider well-prepared initial data of the type
\begin{equation}
u_{\varepsilon}^0(x)=\psi_n(\frac{x}{\varepsilon},\theta^n)e^{2i\pi\frac{\theta^n\cdot x}{\varepsilon}}v^0(x),
\end{equation}
we shall prove in Theorem 1 and Theorem 2 that the solution of (\ref{Equ.1}) with different type of noise satisfies
\begin{equation}
u_{\varepsilon}(t, x, \omega)\approx e^{i\frac{\lambda_n(\theta^n)t}{\varepsilon^2}}e^{2i\pi\frac{\theta^n\cdot x}{\varepsilon}}\psi_n(\frac{x}{\varepsilon},\theta^n)v(t, x, \omega),
\end{equation}
where $v(t, x, \omega)$ is the unique solution of the corresponding homogenized stochastic Schr\"odinger equation.

The paper is organized as follows. In Section 2 we define the functional spaces, make some assumptions, and introduce some results on Bloch theory and two-scale convergence. In Section 3 we derive all the two-scale limits and pass to the limit in the variational formulation using particular test functions. We obtain the homogenized stochastic Schr\"odinger equation with additive noise. Section 4 is devoted to the derivation of the homogenized stochastic Schr\"odinger equation with multiplicative noise.

\section{Preliminaries}
The inner product in $L^2(D)$ is given by
\begin{equation}\nonumber
(u,v):=\int_D u(x)\bar{v}(x)dx, \quad \text{for all}\ u, v\in L^2(D),
\end{equation}
where $\bar{v}$ is the complex conjugate of $v$, while the inner product in $H^1(D)$ is constituted by
\begin{equation}\nonumber
(u,v)_{H^1}:=\int_D [u(x)\bar{v}(x)+\frac{d}{dx}u(x)\frac{d}{dx}\bar{v}(x)]dx, \quad \text{for all}\ u, v\in H^1(D).
\end{equation}

We make the following assumptions on this stochastic Schr\"odinger equation.
\par
{\bf Hypothesis  H.1. }The coefficients $\sigma(y)$ and $c(y)$ are real measurable bounded periodic functions, i.e. their entries belong to $L^{\infty}(\mathbb{T})$, while $d(x, y)$ is real measurable and bounded with respect to $x$, and periodic continuous with respect to $y$, i.e. its entries belong to $L^{\infty}(D; C(\mathbb{T}))$.

\par
{\bf Hypothesis  H.2. }The function $ \sigma $ is uniformly positive definite, i.e. there exists $\nu > 0$ such that: $\sigma(y)\geq\nu$,  for a.e. $y\in \mathbb{T}$ .
\subsection{Bloch Spectrum}
We recall the so-called Bloch (or shifted) spectral cell equation
\begin{equation}\label{Cell}
-(\frac{\partial}{\partial y}+2i\pi\theta)\left(\sigma(y)(\frac{\partial}{\partial y}+2i\pi\theta)\psi_n\right)+c(y)\psi_n=\lambda_n(\theta)\psi_n \quad \text{in}\ \mathbb{T}.
\end{equation}
which, as a compact self-adjoint complex-valued operator on $L^2(\mathbb{T})$, admits a countable sequence of real increasing eigenvalues $(\lambda_n)_{n\geqslant 1}$ (repeated with their multiplicity) and normalized eigenfunctions $(\psi_n)_{n\geqslant 1}$ with $\|\psi_n\|_{L^2(\mathbb{T})}=1$. The dual parameter $\theta$ is called the Bloch frequency and it runs in the dual cell of $\mathbb{T}$, i.e. by periodicity it is enough to consider $\theta\in\mathbb{T}$.

In the sequel, we shall consider an energy level $n\geq 1$ and a Bloch parameter $\theta^n\in\mathbb{T}$ such that the eigenvalue $\lambda_n(\theta^n)$ satisfies some assumptions. Depending on these precise assumptions we obtain different homogenized limits for the Schr\"odinger equation (\ref{Equ.1}).
\par
{\bf Hypothesis  H.3. }
$\lambda_n(\theta^n)$  is a simple eigenvalue; $\theta^n$ is a critical point of $\lambda_n(\theta)$, i.e. $\frac{\partial\lambda_n}{\partial\theta}(\theta^n)=0$.
\begin{remark}
This assumption of simplicity has two important consequences. First, if $\lambda_n(\theta^n)$ is simple, then it is infinitely differentiable in a vicinity of $\theta^n$. Second, if $\lambda_n(\theta^n)$ is simple, then the limit problem is going to be a single Schr\"odinger equation.
\end{remark}

Under Hypothesis  H.3., it is a classical matter to prove that the $n^{th}$ eigenpair of (\ref{Cell}) is smooth in a neighborhood of $\theta^n$\cite{Ka66}. Introducing the operator $\mathbb{A}_n(\theta)$ defined on $L^2(\mathbb{T})$ by
\begin{equation}\label{Oper.1}
\mathbb{A}_n(\theta)\psi=-(\frac{\partial}{\partial y}+2i\pi\theta)\left( \sigma(y)(\frac{\partial}{\partial y}+2i\pi\theta)\psi\right)+c(y)\psi-\lambda_n(\theta)\psi,
\end{equation}
it is easy to differentiate (\ref{Cell}). The first derivative satisfies
\begin{equation}\label{Oper.2}
\mathbb{A}_n(\theta)\frac{\partial\psi_n}{\partial\theta}=2i\pi \sigma(y)(\frac{\partial}{\partial y}+2i\pi\theta)\psi_n+(\frac{\partial}{\partial y}+2i\pi\theta)(\sigma(y)2i\pi \psi_n)+\frac{\partial\lambda_n}{\partial\theta}(\theta)\psi_n,
\end{equation}
and the second derivative is
\begin{equation}\label{Oper.3}
\begin{split}
\mathbb{A}_n(\theta)\frac{\partial^2\psi_n}{\partial\theta^2}=&4i\pi \sigma(y)(\frac{\partial}{\partial y}+2i\pi\theta)\frac{\partial\psi_n}{\partial\theta}+2(\frac{\partial}{\partial y}+2i\pi\theta)\left(\sigma(y)2i\pi \frac{\partial\psi_n}{\partial\theta}\right)\\
&+2\frac{\partial\lambda_n}{\partial\theta}(\theta)\frac{\partial\psi_n}{\partial\theta}-8\pi^2\sigma(y)\psi_n+\frac{\partial^2\lambda_n}{\partial\theta^2}(\theta)\psi_n.
\end{split}
\end{equation}
Under Hypothesis  H.3., we have $\frac{\partial\lambda_n}{\partial\theta}(\theta^n)=0$, thus equation (\ref{Oper.2}) and (\ref{Oper.3}) simplify for $\theta=\theta^n$ and we find
\begin{equation}
\frac{\partial\psi_n}{\partial\theta}=2i\pi\zeta, \quad  \frac{\partial^2\psi_n}{\partial\theta^2}=-4\pi^2\chi,
\end{equation}
where $\zeta$ is the solution of
\begin{equation}\label{SolutionZeta}
\mathbb{A}_n(\theta^n)\zeta=\sigma(y)(\frac{\partial}{\partial y}+2i\pi\theta^n)\psi_n+(\frac{\partial}{\partial y}+2i\pi\theta^n)(\sigma(y)\psi_n) \quad\text{in}\ \mathbb{T},
\end{equation}
and $\chi$ is the solution of
\begin{equation}\label{SolutionChi}
\mathbb{A}_n(\theta^n)\chi=2\sigma(y)(\frac{\partial}{\partial y}+2i\pi\theta^n)\zeta+2(\frac{\partial}{\partial y}+2i\pi\theta^n)(\sigma(y)\zeta)+2\sigma(y)\psi_n-\frac{1}{4\pi^2}\frac{\partial^2\lambda_n}{\partial\theta^2}(\theta^n)\psi_n \quad\text{in}\ \mathbb{T}.
\end{equation}
There exists a unique solution of (\ref{SolutionZeta}), up to the addition of a multiple of $\psi_n$. Indeed, the right hand side of (\ref{SolutionZeta}) satisfies the required compatibility condition or Fredholm alternative (i.e. it is orthogonal to $\psi_n$) because $\zeta$ is just a multiple of the partial derivative of $\psi_n$ with respect to $\theta$ which necessarily exists. By the same token, there exists a unique solution of (\ref{SolutionChi}), up to the addition of a multiple of $\psi_n$. The compatibility condition of (\ref{SolutionChi}) yields a formula for the value $\frac{\partial^2\lambda_n}{\partial\theta^2}(\theta^n)$, see \cite{Al04, Al05}.

\subsection{Two-Scale Convergence}
We will summarize in this section several results about the two-scale convergence that we will use throughout the paper. For the results stated without proofs, see \cite{Al92, BMW94, BM16}. We denote by $C_{\#}(\mathbb{T})$ the space of functions from $C(\bar{\mathbb{T}})$ that have $\mathbb{T}$-periodic boundary values.
\begin{definition}
We say that a sequence $u_{\varepsilon}\in L^2(\Omega\times[0,T]\times D)$ two scale converges to $u\in L^2(\Omega\times[0,T]\times D\times\mathbb{T})$, and denote this convergence by
\begin{equation}\nonumber
u_{\varepsilon}\xrightarrow{2-s}u \quad \text{in} \  \Omega\times[0,T]\times D,
\end{equation}
if for every $\Psi\in L^2(\Omega\times[0,T]\times D;C_{\#}(\mathbb{T}))$ we have
\begin{equation}\nonumber
\begin{split}
\lim_{\varepsilon\to 0}\int_{\Omega}\int^T_0\int_Du_{\varepsilon}(\omega, t, x)&\Psi(\omega, t, x, \frac{x}{\varepsilon})dxdtd\mathbb{P}\\
&=\int_{\Omega}\int^T_0\int_D\int_{\mathbb{T}}u(\omega, t, x, y)\Psi(\omega, t, x, y)dydxdtd\mathbb{P}.\\
\end{split}
\end{equation}
\end{definition}

The following propositions are of great importance in obtaining the homogenization result.
\begin{proposition}
Assume that the sequence $u_{\varepsilon}$ is uniformly bounded in $L^2(\Omega\times[0,T]\times D)$. Then exists a subsequence, still denoted by $u_{\varepsilon}$, and a limit $u_0(\omega, t, x, y)\in L^2(\Omega\times[0,T]\times D\times\mathbb{T})$ such that
\begin{equation}\nonumber
u_{\varepsilon}(\omega, t, x)\xrightarrow{2-s}u_0(\omega, t, x, y) \quad \text{in} \  \Omega\times[0,T]\times D.
\end{equation}
\end{proposition}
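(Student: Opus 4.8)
The statement is the standard two-scale compactness result of Nguetseng and Allaire, adapted to the extra probability variable $\omega$; the plan is to treat $\Omega\times[0,T]\times D$ as a single finite measure space $X$ with measure $\mu = \mathbb{P}\otimes dt\otimes dx$ and to run the classical argument verbatim, using only that $L^2(X;C_\#(\mathbb{T}))$ is separable and dense in $L^2(X\times\mathbb{T})$. First I would fix a countable dense family $\{\Psi_k\}_{k\geq 1}$ in $L^2(X;C_\#(\mathbb{T}))$, noting that for each fixed $k$ the function $(\omega,t,x)\mapsto \Psi_k(\omega,t,x,x/\varepsilon)$ is measurable and, by the estimate $\|\Psi_k(\cdot,x/\varepsilon)\|_{L^2(X)}\to \|\Psi_k\|_{L^2(X\times\mathbb{T})}$ (which follows from the continuity and periodicity in $y$, together with the Lebesgue dominated convergence / mean-value property of periodic oscillations), is uniformly bounded in $L^2(X)$ as $\varepsilon\to 0$.

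Next I would consider the scalar sequences $\varepsilon\mapsto \int_X u_\varepsilon(\omega,t,x)\,\Psi_k(\omega,t,x,x/\varepsilon)\,d\mu$. By the Cauchy–Schwarz inequality and the uniform $L^2(X)$ bounds on both $u_\varepsilon$ and $\Psi_k(\cdot,x/\varepsilon)$, these are bounded sequences of complex numbers; by a diagonal extraction over $k$ I obtain a subsequence (still denoted $u_\varepsilon$) along which
\begin{equation}\nonumber
\int_X u_\varepsilon(\omega,t,x)\,\Psi_k\!\left(\omega,t,x,\tfrac{x}{\varepsilon}\right)d\mu \;\longrightarrow\; L(\Psi_k)\qquad\text{for every }k.
\end{equation}
The map $\Psi_k\mapsto L(\Psi_k)$ extends by density and the uniform bound $|L(\Psi)|\leq C\|\Psi\|_{L^2(X\times\mathbb{T})}$ (with $C=\sup_\varepsilon\|u_\varepsilon\|_{L^2(X)}$) to a bounded linear functional on $L^2(X\times\mathbb{T})$. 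By the Riesz representation theorem there is a unique $u_0\in L^2(X\times\mathbb{T})$ with $L(\Psi)=\int_{X\times\mathbb{T}} u_0(\omega,t,x,y)\,\overline{\overline{\Psi}}\,dyd\mu$; matching this with the definition of two-scale convergence gives exactly $u_\varepsilon\xrightarrow{2-s}u_0$ in $\Omega\times[0,T]\times D$, which is the claim.

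The main obstacle — and the only point that is not a line-by-line copy of the deterministic proof — is the passage along the diagonal subsequence from the countable dense set $\{\Psi_k\}$ to an arbitrary test function $\Psi\in L^2(X;C_\#(\mathbb{T}))$: one must show $\int_X u_\varepsilon(\cdot,x/\varepsilon)\Psi(\cdot,x/\varepsilon)\,d\mu\to L(\Psi)$, not merely that the limit functional is defined. This requires the equicontinuity estimate
\begin{equation}\nonumber
\left|\int_X u_\varepsilon\big[\Psi(\cdot,\tfrac{x}{\varepsilon})-\Psi_k(\cdot,\tfrac{x}{\varepsilon})\big]d\mu\right|\;\leq\; C\,\big\|\Psi-\Psi_k\big\|_{L^2(X;C_\#(\mathbb{T}))},
\end{equation}
uniformly in $\varepsilon$, which in turn rests on the admissibility of test functions of the form $\Psi(\omega,t,x,x/\varepsilon)$, i.e. the fact (used already for the $\Psi_k$) that $\|\Psi(\cdot,x/\varepsilon)\|_{L^2(X)}$ is bounded by $C\|\Psi\|_{L^2(X;C_\#(\mathbb{T}))}$ uniformly in $\varepsilon$; here the role of $\Omega$ is completely passive since no oscillation acts on it, so the measurability in $\omega$ carries through by Fubini. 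Once this uniform equicontinuity is in hand, a standard $3\varepsilon$-argument closes the convergence for all admissible $\Psi$, and the existence of $u_0$ together with the bound $\|u_0\|_{L^2(X\times\mathbb{T})}\leq\liminf_\varepsilon\|u_\varepsilon\|_{L^2(X)}$ follows as above.
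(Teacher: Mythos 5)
The paper states this proposition without proof, deferring to \cite{Al92, BMW94, BM16}, and your argument is precisely the standard Nguetseng--Allaire compactness proof from those references --- admissibility of test functions in $L^2(X;C_{\#}(\mathbb{T}))$, diagonal extraction over a countable dense family, the uniform equicontinuity estimate, and Riesz representation --- correctly adapted by treating $\Omega\times[0,T]\times D$ as a single finite measure space on which the oscillation acts only through $x$. The proof is correct and takes essentially the same route as the sources the paper relies on; only the harmless double conjugate $\overline{\overline{\Psi}}$ in the Riesz step should be cleaned up to match the paper's (conjugate-free) definition of the two-scale pairing.
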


\begin{proposition}
Assume that the sequence $u_{\varepsilon}$ is uniformly bounded in $L^2(\Omega\times[0,T]\times D)$, and the sequence $\varepsilon\nabla u_{\varepsilon}$ is also uniformly bounded in $L^2(\Omega\times[0,T]\times D)$. Then there exists a subsequence, still denoted by $u_{\varepsilon}$, and a limit $u_0(\omega, t, x, y)\in L^2(\Omega\times[0,T]\times D;H^1(\mathbb{T}))$ such that
\begin{equation}\nonumber
\begin{split}
&u_{\varepsilon}(\omega, t, x)\xrightarrow{2-s}u_0(\omega, t, x, y) \quad \text{in} \  \Omega\times[0,T]\times D, \\
&\varepsilon\frac{\partial u_{\varepsilon}(\omega, t, x)}{\partial x}\xrightarrow{2-s}\frac{\partial u_0(\omega, t, x, y)}{\partial y} \quad \text{in} \  \Omega\times[0,T]\times D.\\
\end{split}
\end{equation}
\end{proposition}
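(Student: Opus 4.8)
The plan is to run the classical identification argument for two-scale limits of gradients, in the form adapted to the scaling $\varepsilon\,\partial_x u_\varepsilon$; the probability variable $\omega$ plays no active role here, since the oscillations enter only through the slot $x/\varepsilon$, so $(\Omega,\mathcal{F},\mathbb{P})$ is merely an additional measure factor in all of the integrals below. I would first extract a subsequence: since $u_\varepsilon$ and $\varepsilon\,\partial_x u_\varepsilon$ are both uniformly bounded in $L^2(\Omega\times[0,T]\times D)$, Proposition~1 applied to both sequences (with a diagonal extraction) provides a subsequence, still denoted $u_\varepsilon$, and limits $u_0,\xi_0\in L^2(\Omega\times[0,T]\times D\times\mathbb{T})$ with $u_\varepsilon\xrightarrow{2-s}u_0$ and $\varepsilon\,\partial_x u_\varepsilon\xrightarrow{2-s}\xi_0$. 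Everything then reduces to proving $\xi_0=\partial_y u_0$ in the sense of $\mathbb{T}$-periodic distributions; this simultaneously shows $u_0(\omega,t,x,\cdot)\in H^1(\mathbb{T})$ for a.e.\ $(\omega,t,x)$, i.e.\ $u_0\in L^2(\Omega\times[0,T]\times D;H^1(\mathbb{T}))$, and gives the asserted convergence $\varepsilon\,\partial_x u_\varepsilon\xrightarrow{2-s}\partial_y u_0$.

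To identify $\xi_0$, I would test against a smooth $\Psi(\omega,t,x,y)$ that is compactly supported in $x\in D$ and $\mathbb{T}$-periodic in $y$, so that both $\partial_x\Psi$ and $\partial_y\Psi$ are admissible in the sense of Definition~1. Since $\tfrac{d}{dx}\bigl[\Psi(\omega,t,x,\tfrac{x}{\varepsilon})\bigr]=(\partial_x\Psi)(\omega,t,x,\tfrac{x}{\varepsilon})+\varepsilon^{-1}(\partial_y\Psi)(\omega,t,x,\tfrac{x}{\varepsilon})$, integrating by parts in $x$ --- with no boundary term on $\partial D$ because of the support of $\Psi$ --- yields, writing $\int$ for $\int_\Omega\!\int_0^T\!\int_D(\cdot)\,dx\,dt\,d\mathbb{P}$,
\[
\int \varepsilon\,\partial_x u_\varepsilon\;\Psi\!\left(\omega,t,x,\tfrac{x}{\varepsilon}\right)
= -\int u_\varepsilon\Bigl[\varepsilon\,(\partial_x\Psi)\!\left(\omega,t,x,\tfrac{x}{\varepsilon}\right)+(\partial_y\Psi)\!\left(\omega,t,x,\tfrac{x}{\varepsilon}\right)\Bigr].
\]
Letting $\varepsilon\to0$, the left-hand side tends to $\int\!\int_{\mathbb{T}}\xi_0\Psi$ by the two-scale convergence of $\varepsilon\,\partial_x u_\varepsilon$; on the right, the term carrying the extra factor $\varepsilon$ is $O(\varepsilon)$ by Cauchy--Schwarz and the uniform $L^2$ bound on $u_\varepsilon$, and the remaining term tends to $-\int\!\int_{\mathbb{T}}u_0\,\partial_y\Psi$ by the two-scale convergence of $u_\varepsilon$.

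Passing to the limit therefore gives $\int\!\int_{\mathbb{T}}\xi_0\Psi=-\int\!\int_{\mathbb{T}}u_0\,\partial_y\Psi$ for all such $\Psi$. Specializing to separated test functions $\Psi=\phi(\omega,t,x)\varphi(y)$ with $\varphi\in C^{\infty}_{\#}(\mathbb{T})$ (so there is no boundary term in $y$), and letting $\phi$ range over a dense family of $L^2(\Omega\times[0,T]\times D)$, this is exactly the statement $\partial_y u_0=\xi_0$ weakly on $\mathbb{T}$ for a.e.\ $(\omega,t,x)$; since $\xi_0\in L^2$, this proves $u_0\in L^2(\Omega\times[0,T]\times D;H^1(\mathbb{T}))$ with $\partial_y u_0=\xi_0$, and uniqueness of two-scale limits extends the convergence to the whole extracted subsequence.

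I expect the genuinely routine steps to be routine; the two points worth a word of care are: (i) the disposal of the boundary term in the $x$-integration by parts, which I handle by restricting to $\Psi$ compactly supported in $x$ --- harmless, because the target regularity $u_0\in L^2(\ldots;H^1(\mathbb{T}))$ is a local statement in $x$; and (ii) the density and measurability bookkeeping needed to pass from the tested identity to the pointwise-in-$(\omega,t,x)$ distributional conclusion, essentially the density of products $\phi(\omega,t,x)\varphi(y)$, $\phi\in L^2(\Omega\times[0,T]\times D)$, $\varphi\in C^{\infty}_{\#}(\mathbb{T})$, in $L^2(\Omega\times[0,T]\times D;C_{\#}(\mathbb{T}))$. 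Both are standard, and the stochastic variable introduces nothing beyond carrying the factor $(\Omega,\mathcal{F},\mathbb{P})$ through every integral.
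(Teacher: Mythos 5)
Your proof is correct: the paper itself states this proposition without proof, deferring to the cited references (Allaire 1992, Bourgeat--Mikeli\'c--Wright 1994, Bessaih--Maris 2016), and your argument is precisely the classical identification of the two-scale limit of $\varepsilon\,\partial_x u_\varepsilon$ given there --- compactness from Proposition~1 for both sequences, integration by parts against $\varepsilon\,\Psi(\omega,t,x,x/\varepsilon)$ with $\Psi$ compactly supported in $x$, and identification of the limit as $\partial_y u_0$ via the periodic-in-$y$ distributional formulation. The two points you flag (boundary terms, density of separated test functions) are handled correctly, and the extra measure factor $(\Omega,\mathcal{F},\mathbb{P})$ indeed changes nothing in the argument.
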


Notation.  for any function $\phi(x, y)$ defined on $D\times \mathbb{T}$, we denote by $\phi^{\varepsilon}$ the function $\phi(x, \frac{x}{\varepsilon})$.

\section{Homogenization with Additive Noise}
We now study the homogenization of the following Schr\"odinger equation with additive white noise
\begin{equation}\label{Equ.2}
\begin{cases}
$$i\frac{\partial u_{\varepsilon}}{\partial t} - \frac{\partial}{\partial x}(\sigma(\frac{x}{\varepsilon})\frac{\partial u_{\varepsilon}}{\partial x} )+(\varepsilon^{-2}c(\frac{x}{\varepsilon})+d(x,\frac{x}{\varepsilon}))u_{\varepsilon}+g(t, \frac{x}{\varepsilon})\frac{dW(t)}{dt}=0  \quad \text{in} \ D\times [0,T], \\
u_\varepsilon=0 \quad \text{on} \ \partial D\times [0,T], \\
u_\varepsilon(0,x)=u^0_{\varepsilon}(x)  \quad \text{in} \ D.$$
\end{cases}
\end{equation}
\par
{\bf Hypothesis  H.4. } We assume that the function $g(t, \frac{x}{\varepsilon})$ has the following type,
\begin{equation}\nonumber
g(t, \frac{x}{\varepsilon})=e^{i\frac{\lambda_n(\theta^n)t}{\varepsilon^2}}e^{2i\pi\frac{\theta^n\cdot x}{\varepsilon}}\tilde{g}(\frac{x}{\varepsilon}),
\end{equation}
where $\tilde{g}$ is real measurable bounded periodic function.

We obtain the priori estimates, existence and uniqueness of the variational solution of Schr\"odinger equation (\ref{Equ.2}). For the following results, see \cite{Ke16}.
\begin{lemma}
Assume (H.1., H.2., H.4.). For every $\varepsilon>0$, $u^0_{\varepsilon}\in H^1(D)$, and $T>0$, there exists a unique variational solution $u_{\varepsilon}\in L^2(\Omega;C([0,T]);L^2(D))\bigcap L^2(\Omega\times[0,T];H^1(D))$ of stochastic Schr\"odinger equation (\ref{Equ.2}) in the following sense:
\begin{equation}
\begin{split}
(u_{\varepsilon},v)&=(u^0_{\varepsilon},v)-i\int_0^t(\sigma(\frac{x}{\varepsilon})\frac{\partial u_{\varepsilon}}{\partial x}, \frac{\partial v}{\partial x} )ds+i\int_0^t\left((\varepsilon^{-2}c(\frac{x}{\varepsilon})+d(x,\frac{x}{\varepsilon}))u_{\varepsilon},v\right)ds\\
&+i\int_0^t(g(t, \frac{x}{\varepsilon}),v)dW(s),
\end{split}
\end{equation}
for a.e. $\omega\in\Omega$, all $t\in[0,T]$ and for all $v\in H^1(D)$. Moreover, there exists a constant $C_T$ that depends on $T$ such that
\begin{equation}
\mathbb{E}(\sup_{0\leqslant t\leqslant T}\|u_{\varepsilon}\|^2+\varepsilon^2\int_0^T\|u_{\varepsilon}\|^2_{H^1}dt)<C_T.
\end{equation}
\end{lemma}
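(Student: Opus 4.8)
The plan is to obtain the result by the standard Galerkin/energy method for stochastic evolution equations, exploiting the fact that the deterministic part of \eqref{Equ.2} is (up to lower-order terms) a skew-adjoint Schr\"odinger operator on $L^2(D)$ and that the noise in \eqref{Equ.2} is \emph{additive}. First I would recast the equation in the abstract form $i\,du_\varepsilon = A_\varepsilon u_\varepsilon\,dt + d(x,x/\varepsilon)u_\varepsilon\,dt + g(t,x/\varepsilon)\,dW(t)$ on the Gelfand triple $H^1_0(D)\hookrightarrow L^2(D)\hookrightarrow H^{-1}(D)$, where $A_\varepsilon v = -\partial_x(\sigma(x/\varepsilon)\partial_x v) + \varepsilon^{-2}c(x/\varepsilon)v$. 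By H.1 and H.2 the bilinear form $a_\varepsilon(u,v) = \int_D \sigma(x/\varepsilon)\partial_x u\,\overline{\partial_x v}\,dx + \varepsilon^{-2}\int_D c(x/\varepsilon)u\bar v\,dx$ is bounded on $H^1_0(D)$ (with norm $O(\varepsilon^{-2})$) and satisfies a G\aa rding inequality $\mathrm{Re}\,a_\varepsilon(v,v) \ge \nu\|v\|_{H^1}^2 - C\varepsilon^{-2}\|v\|_{L^2}^2$; the bounded perturbation $d$ and the multiplication by $i$ do not destroy these properties. Existence and uniqueness of the variational solution then follow from the cited theory in \cite{Ke16}: construct finite-dimensional Galerkin approximations $u_\varepsilon^N$ (e.g. on the eigenbasis of the Dirichlet Laplacian), solve the resulting system of linear SDEs with additive noise (global solvability is immediate since the coefficients are linear), and pass to the limit $N\to\infty$ using the a priori bounds below.

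The heart of the argument is the a priori estimate. I would apply the It\^o formula for $\|u_\varepsilon^N(t)\|_{L^2}^2$ along the Galerkin approximation. Because $A_\varepsilon$ is self-adjoint, the term $-i\langle A_\varepsilon u_\varepsilon, u_\varepsilon\rangle$ contributes a purely imaginary quantity, so its real part vanishes and the leading $\varepsilon^{-2}$ term drops out of the $L^2$ identity; similarly $-i\langle d\,u_\varepsilon, u_\varepsilon\rangle$ is purely imaginary. Hence
\begin{equation}\nonumber
\|u_\varepsilon^N(t)\|^2 = \|P_N u_\varepsilon^0\|^2 + 2\,\mathrm{Re}\!\int_0^t i\,(g(s,\tfrac{x}{\varepsilon}),u_\varepsilon^N)\,dW(s) + \int_0^t \|P_N g(s,\tfrac{x}{\varepsilon})\|^2\,ds,
\end{equation}
where the second term is a martingale and the last is controlled by $T\|g\|_{L^\infty([0,T]\times\mathbb{T})}^2|D|$ using H.4 (note $|g| = |\tilde g|$). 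Taking $\sup_{t}$, using the Burkholder--Davis--Gundy inequality on the stochastic integral, and absorbing, I get $\mathbb{E}\big(\sup_{[0,T]}\|u_\varepsilon^N\|^2\big)\le C_T$ uniformly in $N$ and $\varepsilon$, with $C_T$ depending on $\|u_\varepsilon^0\|_{L^2}$, $\|\tilde g\|_{L^\infty}$, $|D|$ and $T$ but not on $\varepsilon$. For the $H^1$ bound I would instead test the equation with $u_\varepsilon$ itself (or use It\^o on a suitable energy functional): the real part of the equation paired with $\partial_t u_\varepsilon$, or equivalently estimating $\mathrm{Re}\,a_\varepsilon(u_\varepsilon,u_\varepsilon)$, yields $\varepsilon^2 a_\varepsilon(u_\varepsilon,u_\varepsilon)\gtrsim \varepsilon^2\nu\|u_\varepsilon\|_{H^1}^2 - C\|u_\varepsilon\|_{L^2}^2$; after multiplying by $\varepsilon^2$ the $\varepsilon^{-2}$ singularity is exactly cancelled, the right-hand side is bounded by the already-established $L^2$ bound plus the noise contribution, and integrating in $t$ gives $\mathbb{E}\big(\varepsilon^2\int_0^T\|u_\varepsilon\|_{H^1}^2\,dt\big)\le C_T$.

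The main obstacle I anticipate is purely technical rather than conceptual: justifying the It\^o formula and the energy identity directly on $u_\varepsilon$ (which a priori lives only in $H^1_0(D)$, not in the domain of $A_\varepsilon$) requires either working throughout with the Galerkin approximations and passing to the limit with weak/weak-$*$ lower semicontinuity of the norms, or invoking an It\^o formula for the Gelfand-triple duality pairing $\langle A_\varepsilon u_\varepsilon, u_\varepsilon\rangle$; one must check that the extra terms produced by the time-dependent, rapidly oscillating $g$ are handled by H.4 (measurability in $t$ and boundedness suffice, since $g$ enters only additively). Once the uniform bounds are in hand, passing $N\to\infty$ to recover a genuine variational solution satisfying the stated identity for all $v\in H^1(D)$, and uniqueness via linearity (the difference of two solutions solves the same equation with zero data and zero forcing, and the $L^2$ It\^o identity forces it to vanish a.s.), are routine. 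I would also remark that the constant $C_T$ can be taken of the form $C(1+\|u_\varepsilon^0\|_{L^2}^2 + T\|\tilde g\|_{L^\infty}^2)e^{CT}$, which makes transparent its independence of $\varepsilon$ — the crucial point for the homogenization analysis of Section 3.
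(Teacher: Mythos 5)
The paper does not actually prove this lemma: it is quoted from the literature (the reference \cite{Ke16} of Keller and Lisei), so there is no in-paper argument to compare against line by line. Your overall strategy --- Gelfand triple, Galerkin approximation on a basis of $H^1_0(D)$, It\^o's formula for $\|u^N_\varepsilon\|^2$, Burkholder--Davis--Gundy, weak lower semicontinuity to pass $N\to\infty$, uniqueness by linearity --- is the standard route and is essentially what the cited reference carries out. The $L^2$ part of your estimate is correct: since $\sigma$, $c$, $d$ are real, the drift contributes nothing to $\mathrm{Re}\,(du,u)$, and the additive noise gives a martingale plus the bounded It\^o correction $\|g\|^2$, so $\mathbb{E}\sup_t\|u_\varepsilon\|^2\le C_T$ follows as you say.

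The $H^1$ part of your argument, however, has a genuine gap. You write that ``estimating $\mathrm{Re}\,a_\varepsilon(u_\varepsilon,u_\varepsilon)$'' via the G\aa rding inequality, together with the already-established $L^2$ bound, yields $\mathbb{E}\bigl(\varepsilon^2\int_0^T\|u_\varepsilon\|_{H^1}^2\,dt\bigr)\le C_T$. It does not: the G\aa rding inequality is a \emph{lower} bound on $a_\varepsilon(u_\varepsilon,u_\varepsilon)$, and to conclude you would need an \emph{upper} bound on this quantity, which the $L^2$ estimate cannot provide (testing the equation with $u_\varepsilon$ makes the second-order term purely imaginary and it drops out entirely, which is precisely why the mass identity gives no gradient control). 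Since the Schr\"odinger flow has no parabolic smoothing, the only mechanism producing the $H^1$ bound is It\^o's formula applied to the Hamiltonian energy $\mathcal{E}_\varepsilon(u)=\tfrac12\int_D\sigma(\tfrac{x}{\varepsilon})|\partial_x u|^2dx+\tfrac12\int_D(\varepsilon^{-2}c(\tfrac{x}{\varepsilon})+d)|u|^2dx$, whose drift contribution vanishes by the skew-adjoint structure; multiplying by $\varepsilon^2$ then controls $\varepsilon^2\nu\|\partial_x u_\varepsilon\|^2$ up to $O(\|u_\varepsilon\|^2)$ terms. Carrying this out requires (i) that $\varepsilon^2\mathcal{E}_\varepsilon(u^0_\varepsilon)$ be bounded, i.e.\ $\varepsilon\|u^0_\varepsilon\|_{H^1}=O(1)$, which holds for the well-prepared data of Theorem~1 but not for arbitrary $u^0_\varepsilon\in H^1(D)$ (so $C_T$ is not independent of $\varepsilon$ in the generality you claim), and (ii) enough spatial regularity of the noise for the martingale term $\mathrm{Re}\,a_\varepsilon(u_\varepsilon,ig)$ and the It\^o correction $\varepsilon^2 a_\varepsilon(g,g)$ to make sense, i.e.\ $\tilde g\in H^1(\mathbb{T})$ rather than merely $L^\infty(\mathbb{T})$. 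You should replace the circular G\aa rding step by this energy identity and state the extra regularity it consumes.
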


\begin{theorem}
Assume (H.1.-H.4.) and that the initial data $u_{\varepsilon}^0\in H^1(D)$ is of the form
\begin{equation}
u_{\varepsilon}^0(x)=\psi_n(\frac{x}{\varepsilon},\theta^n)e^{2i\pi\frac{\theta^n\cdot x}{\varepsilon}}v^0(x),
\end{equation}
with $v^0\in H^1(D)$. The solution of (\ref{Equ.2}) can be written as
\begin{equation}
u_{\varepsilon}(t,x)=e^{i\frac{\lambda_n(\theta^n)t}{\varepsilon^2}}e^{2i\pi\frac{\theta^n\cdot x}{\varepsilon}}v_{\varepsilon}(t, x),
\end{equation}
where $v_{\varepsilon}$ two-scale converges  to $\psi_n(y,\theta^n)v(t, x)$, uniformly on compact time intervals in $\mathbb{R}^+$, and $v$ is the unique
solution of the homogenized Schr\"odinger equation
\begin{equation}\label{Homo.1}
\begin{cases}
$$i\frac{\partial v}{\partial t} - \frac{\partial}{\partial x}(\sigma_n^*\frac{\partial v}{\partial x})+d_n^*(x)v+g^*\frac{dW(t)}{dt}=0  \quad \text{in} \ D\times [0,T], \\
v=0 \quad \text{on} \ \partial D\times [0,T], \\
v(0,x)=v^0(x)  \quad \text{in} \ D.$$
\end{cases}
\end{equation}
with $\sigma_n^*=\frac{1}{8\pi^2}\frac{\partial^2\lambda_n(\theta^n)}{\partial\theta^2}$, $d^*_n(x)=\int_{\mathbb{T}}d(x, y)|\psi_n(y)|^2dy$, and $g_n^*=\int_{\mathbb{T}}\tilde{g}(y)|\psi_n(y)|^2dy$.
\end{theorem}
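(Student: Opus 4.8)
The plan is to follow the Bloch-wave plus two-scale strategy of Allaire and Piatnitski, adapted to the additive, spatially oscillating forcing. First I would make the factorization a \emph{definition}: set $v_{\varepsilon}(t,x):=e^{-i\lambda_n(\theta^n)t/\varepsilon^2}e^{-2i\pi\theta^n\cdot x/\varepsilon}u_{\varepsilon}(t,x)$, so that the displayed form of $u_{\varepsilon}$ is automatic and the whole content is the two-scale behaviour of $v_{\varepsilon}$. Because $\lambda_n(\theta^n)$ and $\theta^n$ are real, the unimodular prefactor drops out of the energy bound of Lemma 1, giving $v_{\varepsilon}$ bounded in $L^2(\Omega;C([0,T];L^2(D)))$; since $\varepsilon\nabla u_{\varepsilon}=e^{i\lambda_n t/\varepsilon^2}e^{2i\pi\theta^n\cdot x/\varepsilon}\bigl(2i\pi\theta^n v_{\varepsilon}+\varepsilon\nabla v_{\varepsilon}\bigr)$, the sequence $\varepsilon\nabla v_{\varepsilon}$ is bounded in $L^2(\Omega\times[0,T]\times D)$. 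The gauge change turns $-\partial_x(\sigma^{\varepsilon}\partial_x\cdot)+\varepsilon^{-2}c^{\varepsilon}\cdot$ into $-(\partial_x+2i\pi\theta^n/\varepsilon)\bigl(\sigma^{\varepsilon}(\partial_x+2i\pi\theta^n/\varepsilon)\cdot\bigr)+\varepsilon^{-2}c^{\varepsilon}\cdot$, whose $\varepsilon^{-2}$-part, acting in the fast variable, is the cell operator; together with the $-\varepsilon^{-2}\lambda_n(\theta^n)v_{\varepsilon}$ coming from $\partial_t$ it assembles into $\varepsilon^{-2}\mathbb{A}_n(\theta^n)$. The noise term becomes simply $\tilde g^{\varepsilon}\,dW/dt$, since the oscillating phase of $g$ in Hypothesis H.4 exactly cancels the gauge factor.

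Next, by Proposition 2 there is a subsequence and a limit $v_0\in L^2(\Omega\times[0,T]\times D;H^1(\mathbb{T}))$ with $v_{\varepsilon}\xrightarrow{2-s}v_0$ and $\varepsilon\nabla v_{\varepsilon}\xrightarrow{2-s}\partial_y v_0$. Testing the weak form of the $v_{\varepsilon}$-equation against $\varepsilon\,\phi(\omega,t,x)\,\eta(x/\varepsilon)$ with $\phi$ smooth and compactly supported in $D$ and $\eta$ a smooth $\mathbb{T}$-periodic function, multiplying by $\varepsilon$, and letting $\varepsilon\to0$, only the $\varepsilon^{-2}$-singular terms survive; they yield, for a.e. $(\omega,t,x)$, that $v_0(\omega,t,x,\cdot)$ solves $\mathbb{A}_n(\theta^n)v_0=0$ in $\mathbb{T}$. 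By Hypothesis H.3 the eigenvalue is simple, so $\ker\mathbb{A}_n(\theta^n)=\mathrm{span}\{\psi_n(\cdot,\theta^n)\}$ and hence $v_0(\omega,t,x,y)=v(\omega,t,x)\psi_n(y,\theta^n)$ for some $v\in L^2(\Omega\times[0,T]\times D)$; the normalization $\|\psi_n\|_{L^2(\mathbb{T})}=1$ identifies $v$ with the weak limit of $\int_{\mathbb{T}}v_{\varepsilon}\overline{\psi_n^{\varepsilon}}$.

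To get the equation for $v$ I would pass to the limit in the weak formulation of (\ref{Equ.2}) with oscillating test functions
\[
\Phi_{\varepsilon}(t,x)=e^{i\lambda_n(\theta^n)t/\varepsilon^2}e^{2i\pi\theta^n\cdot x/\varepsilon}\Bigl(\psi_n(\tfrac{x}{\varepsilon},\theta^n)\,\phi(t,x)+\varepsilon\,\zeta(\tfrac{x}{\varepsilon})\,\partial_x\phi(t,x)\Bigr),
\]
where $\zeta$ is the first corrector of (\ref{SolutionZeta}) and $\phi$ is smooth, compactly supported in $D$, with $\phi(T,\cdot)=0$. With this choice the $\varepsilon^{-2}$-terms cancel because $\psi_n$ solves (\ref{Cell}), the $\varepsilon^{-1}$-terms cancel because $\zeta$ solves (\ref{SolutionZeta}), and the $O(1)$-terms, after using $v_{\varepsilon}\xrightarrow{2-s}\psi_n v$ and $\varepsilon\nabla v_{\varepsilon}\xrightarrow{2-s}v\,\partial_y\psi_n$, collapse — through the Fredholm compatibility condition underlying (\ref{SolutionChi}) — to the weak form of (\ref{Homo.1}) with $\sigma_n^*=\tfrac{1}{8\pi^2}\partial_\theta^2\lambda_n(\theta^n)$ and $d_n^*(x)=\int_{\mathbb{T}}d(x,y)|\psi_n(y)|^2\,dy$. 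The stochastic term is treated on its own: $(g^{\varepsilon},\Phi_{\varepsilon})=\int_D\tilde g(\tfrac{x}{\varepsilon})\overline{\psi_n(\tfrac{x}{\varepsilon},\theta^n)}\,\overline{\phi(t,x)}\,dx+O(\varepsilon)$ is deterministic and converges in $L^2(0,T)$ to $\int_D g_n^*\,\overline{\phi(t,x)}\,dx$, so by the It\^o isometry $\int_0^t(g^{\varepsilon},\Phi_{\varepsilon})\,dW\to\int_0^t(g_n^*,\phi)\,dW$ in $L^2(\Omega)$. Finally, (\ref{Homo.1}) is again a linear stochastic Schr\"odinger equation to which Lemma 1 applies, so its variational solution is unique; therefore the whole sequence $v_{\varepsilon}$ two-scale converges, and the $C([0,T];L^2(D))$-bound upgrades the convergence to uniformity on compact time intervals.

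The main obstacle is this last limit passage: building the first-order corrector so that every negative power of $\varepsilon$ cancels, and then extracting from the $O(1)$-terms the precise effective diffusion $\tfrac{1}{8\pi^2}\partial_\theta^2\lambda_n(\theta^n)$, which is exactly the information carried by the compatibility condition of (\ref{SolutionChi}). This bookkeeping is delicate because $v_{\varepsilon}$ is only bounded (not relatively compact) in $L^2(D)$, so one must ensure that in every product passed to the limit a two-scale convergent factor is paired against an admissible oscillating test function. The stochastic part is comparatively benign here, the noise being additive with deterministic coefficient; the only care needed is the joint measurability in $\omega$ of the test functions used to recover the variational identity $\mathbb{P}$-a.s.
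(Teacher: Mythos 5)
Your proposal is correct and follows essentially the same route as the paper: the gauge transformation defining $v_{\varepsilon}$, the energy bounds from Lemma 1 feeding Proposition 2, the $\varepsilon^2$-scaled test functions that identify the two-scale limit as $v\,\psi_n$ via the cell problem and simplicity of $\lambda_n(\theta^n)$, and the corrector-augmented test function $\psi_n^{\varepsilon}\phi+\varepsilon\zeta^{\varepsilon}\partial_x\phi$ whose singular terms cancel by (\ref{Cell}) and (\ref{SolutionZeta}), with the compatibility condition of (\ref{SolutionChi}) identifying $\sigma_n^*$. Your explicit It\^o-isometry treatment of the additive noise term and the uniqueness argument for convergence of the whole sequence are minor elaborations of what the paper does implicitly.
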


\begin{proof}
Define a sequence $v_{\varepsilon}$ by
\begin{equation}\nonumber
v_{\varepsilon}(t,x)=u_{\varepsilon}(t, x)e^{-i\frac{\lambda_n(\theta^n)t}{\varepsilon^2}}e^{-2i\pi\frac{\theta^n\cdot x}{\varepsilon}}.
\end{equation}
Since $|v_{\varepsilon}|=|u_{\varepsilon}|$, by Lemma 1, we have
\begin{equation}
\mathbb{E}(\sup_{0\leqslant t\leqslant T}\|v_{\varepsilon}\|^2+\varepsilon^2\int_0^T\|v_{\varepsilon}\|^2_{H^1}dt)<C_T,
\end{equation}
and applying Proposition 2, up to a subsequence, there exists a limit $v^*(\omega, t, x, y)\in L^2(\Omega\times[0, T]\times D; H^1(\mathbb{T}))$ such that
\begin{equation}\label{Pro.2}
\begin{split}
&v_{\varepsilon}(\omega, t, x)\xrightarrow{2-s}v^*(\omega, t, x, y) \quad \text{in} \  \Omega\times[0,T]\times D, \\
&\varepsilon\frac{\partial v_{\varepsilon}(\omega, t, x)}{\partial x}\xrightarrow{2-s}\frac{\partial v^*(\omega, t, x, y)}{\partial y} \quad \text{in} \  \Omega\times[0,T]\times D.\\
\end{split}
\end{equation}
Similarly, by definition of the initial data and Proposition 1,
\begin{equation}\label{Pro.1}
v_{\varepsilon}(0, x)\xrightarrow{2-s}\psi_n(y, \theta^n)v^0(x) \quad \text{in} \  \Omega\times[0,T]\times D.
\end{equation}
First step.\ We multiply (\ref{Equ.2}) by the complex conjugate of
 \begin{equation}
\varepsilon^2\phi(\omega, t, x, \frac{x}{\varepsilon}) e^{i\frac{\lambda_n(\theta^n)t}{\varepsilon^2}}e^{2i\pi\frac{\theta^n\cdot x}{\varepsilon}},
\end{equation}
where $\phi(\omega, t, x, \frac{x}{\varepsilon})$ is a smooth test function defined on $\Omega\times[0, T]\times D\times\mathbb{T}$, with compact support in $[0, T]\times D$. 
Integrating with respect to $\omega\in\Omega$ and $t\in[0,T]$, we obtain
\begin{equation}
\begin{split}
i\varepsilon^2&\int_{\Omega}\int_Du^0_{\varepsilon}\bar{\phi}^{\varepsilon}e^{-2i\pi\frac{\theta^n\cdot x}{\varepsilon}}dxd\mathbb{P}-i\varepsilon^2\int_{\Omega}\int_0^T\int_Dv_{\varepsilon}\frac{\partial\bar{\phi}^{\varepsilon}}{\partial t}dxdtd\mathbb{P}\\
&+\int_{\Omega}\int_0^T\int_D\sigma^{\varepsilon}(\varepsilon\frac{\partial}{\partial x}+2i\pi\theta^n)v_{\varepsilon}\cdot(\varepsilon\frac{\partial}{\partial x}-2i\pi\theta^n)\bar{\phi}^{\varepsilon}dxdtd\mathbb{P}\\
&+\int_{\Omega}\int_0^T\int_D(c^{\varepsilon}-\lambda_n(\theta^n)+\varepsilon^2d^{\varepsilon})v_{\varepsilon}\bar{\phi}^{\varepsilon}dxdtd\mathbb{P}\\
&+\int_{\Omega}\int_0^T\int_D\varepsilon^2\tilde{g}^{\varepsilon}\bar{\phi}^{\varepsilon}dxdW(t)d\mathbb{P}=0.\\
\end{split}
\end{equation}
Passing to the two-scale limit term by term and applying (\ref{Pro.2}) and (\ref{Pro.1}), we obtain
\begin{equation}
-(\frac{\partial}{\partial y}+2i\pi\theta^n)\left(\sigma(y)(\frac{\partial}{\partial y}+2i\pi\theta^n)v^*\right)+c(y)v^*=\lambda_n(\theta^n)v^* \quad \text{in}\ \mathbb{T},
\end{equation}
for a.e. $\omega\in\Omega$. By the simplicity of $\lambda_n(\theta^n)$ of Hypothesis H.3., we know that there exists a scalar function $v(\omega, t, x)\in L^2(\Omega\times[0, T]\times D)$ such that
\begin{equation}
v^*(\omega, t, x, y)=v(\omega, t, x)\psi_n(y, \theta^n).\\
\end{equation}
Second step.\ We multiply (\ref{Equ.2}) by the complex conjugate of
 \begin{equation}\label{PsiConj.}
\Psi_{\varepsilon}=e^{i\frac{\lambda_n(\theta^n)t}{\varepsilon^2}}e^{2i\pi\frac{\theta^n\cdot x}{\varepsilon}}\left(\psi_n(\frac{x}{\varepsilon}, \theta^n)\phi(\omega, t, x)+\varepsilon\frac{\partial \phi}{\partial x}(\omega, t, x)\zeta(\frac{x}{\varepsilon})\right),
\end{equation}
where $\phi(\omega, t, x)$ is a smooth test function with compact support in  $[0, T]\times D$, and $\zeta(y)$ is the solution of (\ref{SolutionZeta}). Then we obtain
\begin{equation}\label{Variation1}
\begin{split}
&i\int_{\Omega}\int_Du^0_{\varepsilon}\bar{\Psi}_{\varepsilon}(t=0)dxd\mathbb{P}-i\int_{\Omega}\int_0^T\int_Du_{\varepsilon}\frac{\partial\bar{\Psi}}{\partial t}dxdtd\mathbb{P}\\
&+\int_{\Omega}\int_0^T\int_D\sigma^{\varepsilon}\frac{\partial u_{\varepsilon}}{\partial x} \cdot\frac{\partial\bar{\Psi}_{\varepsilon}}{\partial x}dxdtd\mathbb{P}\\
&+\frac{1}{\varepsilon^2}\int_{\Omega}\int_0^T\int_Dc^{\varepsilon}u_{\varepsilon}\bar{\Psi}_{\varepsilon}dxdtd\mathbb{P}\\
&+\int_{\Omega}\int_0^T\int_Dd^{\varepsilon}u_{\varepsilon}\bar{\Psi}_{\varepsilon}dxdtd\mathbb{P}\\
&+\int_{\Omega}\int_0^T\int_D\tilde{g}^{\varepsilon}\bar{\Psi}_{\varepsilon}dxdW(t)d\mathbb{P} =0.\\
\end{split}
\end{equation}
According to (\ref{PsiConj.}), we get
\begin{equation}\label{Variation2}
\begin{split}
&i\int_{\Omega}\int_Du^0_{\varepsilon}\bar{\Psi}_{\varepsilon}(t=0)dxd\mathbb{P}-i\int_{\Omega}\int_0^T\int_Dv_{\varepsilon}(\bar{\psi}_n^{\varepsilon}\frac{\partial\bar{\phi}}{\partial t}+\varepsilon\frac{\partial^2\bar{\phi}}{\partial x\partial t}\bar{\zeta}^{\varepsilon})dxdtd\mathbb{P}\\
&+\int_{\Omega}\int_0^T\int_D\sigma^{\varepsilon}\frac{\partial u_{\varepsilon}}{\partial x} \cdot\frac{\partial\bar{\Psi}_{\varepsilon}}{\partial x}dxdtd\mathbb{P}\\
&+\frac{1}{\varepsilon^2}\int_{\Omega}\int_0^T\int_D(c^{\varepsilon}-\lambda_n(\theta^n))v_{\varepsilon}\bar{\psi}_n^{\varepsilon}\bar{\phi}dxdtd\mathbb{P}\\
&+\frac{1}{\varepsilon}\int_{\Omega}\int_0^T\int_D(c^{\varepsilon}-\lambda_n(\theta^n))v_{\varepsilon}\frac{\partial\bar{\phi}}{\partial x}\bar{\zeta}^{\varepsilon}dxdtd\mathbb{P}\\
&+\int_{\Omega}\int_0^T\int_Dd^{\varepsilon}v_{\varepsilon}(\bar{\psi}_n^{\varepsilon}\bar{\phi}+\varepsilon\frac{\partial\bar{\phi}}{\partial x}\bar{\zeta}^{\varepsilon})dxdtd\mathbb{P}\\
&+\int_{\Omega}\int_0^T\int_D\tilde{g}^{\varepsilon}\bar{\Psi}_{\varepsilon}dxdW(t)d\mathbb{P} =0.\\
\end{split}
\end{equation}
After some algebra we find that
\begin{equation}\label{Div}
\begin{split}
\int_D\sigma^{\varepsilon}\frac{\partial u_{\varepsilon}}{\partial x} \cdot\frac{\partial\bar{\Psi}_{\varepsilon}}{\partial x}dx&=\int_D\sigma^{\varepsilon}(\frac{\partial}{\partial x}+2i\pi\frac{\theta^n}{\varepsilon})(\bar{\phi} v_{\varepsilon})\cdot(\frac{\partial}{\partial x}-2i\pi\frac{\theta^n}{\varepsilon})\bar{\psi}^{\varepsilon}_ndx\\
&+\varepsilon\int_D\sigma^{\varepsilon} (\frac{\partial}{\partial x}+2i\pi\frac{\theta^n}{\varepsilon})(\frac{\partial\bar{\phi}}{\partial x} v_{\varepsilon})\cdot(\frac{\partial}{\partial x}-2i\pi\frac{\theta^n}{\varepsilon})\bar{\zeta}^{\varepsilon}dx\\
&-\int_D\sigma^{\varepsilon}\frac{\partial\bar{\phi}}{\partial x} v_{\varepsilon}\cdot(\frac{\partial}{\partial x}-2i\pi\frac{\theta^n}{\varepsilon})\bar{\psi}^{\varepsilon}_ndx\\
&+\int_D\sigma^{\varepsilon}(\frac{\partial}{\partial x}+2i\pi\frac{\theta^n}{\varepsilon})(\frac{\partial\bar{\phi}}{\partial x} v_{\varepsilon})\cdot \bar{\psi}^{\varepsilon}_ndx\\
&-\int_D\sigma^{\varepsilon}v_{\varepsilon}\frac{\partial^2\bar{\phi}}{\partial x^2}\cdot \bar{\psi}^{\varepsilon}_ndx\\
&-\int_D\sigma^{\varepsilon}v_{\varepsilon}\frac{\partial^2\bar{\phi}}{\partial x^2}\cdot(\varepsilon\frac{\partial}{\partial x}-2i\pi\theta^n)\bar{\zeta}^{\varepsilon}dx\\
&+\int_D\sigma^{\varepsilon}\bar{\zeta}^{\varepsilon}(\varepsilon\frac{\partial}{\partial x}+2i\pi\theta^n)v_{\varepsilon}\cdot\frac{\partial^2\bar{\phi}}{\partial x^2}dx.\\
\end{split}
\end{equation}
Now, for any smooth compactly supported test function $\Phi$, we deduce from the definition of $\psi_n$ that
\begin{equation}\label{PsiDef.}
\int_D\sigma^{\varepsilon}(\frac{\partial}{\partial x}+2i\pi\frac{\theta^n}{\varepsilon})\psi_n^{\varepsilon}\cdot(\frac{\partial}{\partial x}-2i\pi\frac{\theta^n}{\varepsilon})\bar{\Phi}dx+\frac{1}{\varepsilon^2}\int_D(c^{\varepsilon}-\lambda_n(\theta^n))\psi_n^{\varepsilon}\bar{\Phi}dx=0,
\end{equation}
and from the definition of $\zeta$,
\begin{equation}\label{ZetaDef.}
\begin{split}
&\int_D\sigma^{\varepsilon}(\frac{\partial}{\partial x}+2i\pi\frac{\theta^n}{\varepsilon})\zeta^{\varepsilon}\cdot(\frac{\partial}{\partial x}-2i\pi\frac{\theta^n}{\varepsilon})\bar{\Phi}dx+\frac{1}{\varepsilon^2}\int_D(c^{\varepsilon}-\lambda_n(\theta^n))\zeta^{\varepsilon}\bar{\Phi}dx=\\
&\varepsilon^{-1}\int_D\sigma^{\varepsilon}(\frac{\partial}{\partial x}+2i\pi\frac{\theta^n}{\varepsilon})\psi_n^{\varepsilon}\cdot \bar{\Phi}dx-\varepsilon^{-1}\int_D\sigma^{\varepsilon}\psi_n^{\varepsilon}\cdot(\frac{\partial}{\partial x}-2i\pi\frac{\theta^n}{\varepsilon})\bar{\Phi}dx.\\
\end{split}
\end{equation}

Combining (\ref{Div}) with the other terms of the variational formulation of (\ref{Variation2}), we see that the first line of its right-hand side cancels out because of (\ref{PsiDef.}) with $\Phi=\bar{\phi}v_{\varepsilon}$, and the next three lines cancel out because of (\ref{ZetaDef.}) with $\Phi=\frac{\partial\bar{\phi}}{\partial x}v_{\varepsilon}$. On the other hand, we can pass to the limit in three last terms of (\ref{Div}). 

Finally, (\ref{Equ.2}) multiplied by $\bar{\Psi}_{\varepsilon}$ yields after simplification
\begin{equation}\label{Variation3}
\begin{split}
&i\int_{\Omega}\int_Du^0_{\varepsilon}\bar{\Psi}_{\varepsilon}(t=0)dxd\mathbb{P}-i\int_{\Omega}\int_0^T\int_Dv_{\varepsilon}(\bar{\psi}_n^{\varepsilon}\frac{\partial\bar{\phi}}{\partial t}+\varepsilon\frac{\partial^2\bar{\phi}}{\partial x\partial t}\bar{\zeta}^{\varepsilon})dxdtd\mathbb{P}\\
&-\int_{\Omega}\int_0^T\int_D\sigma^{\varepsilon}v_{\varepsilon}\frac{\partial^2\bar{\phi}}{\partial x^2}\cdot \bar{\psi}^{\varepsilon}_ndxdtd\mathbb{P}\\
&-\int_{\Omega}\int_0^T\int_D\sigma^{\varepsilon}v_{\varepsilon}\frac{\partial^2\bar{\phi}}{\partial x^2}\cdot(\varepsilon\frac{\partial}{\partial x}-2i\pi\theta^n)\bar{\zeta}^{\varepsilon}dxdtd\mathbb{P}\\
&+\int_{\Omega}\int_0^T\int_D\sigma^{\varepsilon}\bar{\zeta}^{\varepsilon}(\varepsilon\frac{\partial}{\partial x}+2i\pi\theta^n)v_{\varepsilon}\cdot\frac{\partial^2\bar{\phi}}{\partial x^2}dxdtd\mathbb{P}\\
&+\int_{\Omega}\int_0^T\int_Dd^{\varepsilon}v_{\varepsilon}(\bar{\psi}_n^{\varepsilon}\bar{\phi}+\varepsilon\frac{\partial\bar{\phi}}{\partial x}\bar{\zeta}^{\varepsilon})dxdtd\mathbb{P}\\
&+\int_{\Omega}\int_0^T\int_D\tilde{g}^{\varepsilon}\bar{\Psi}_{\varepsilon}dxdW(t)d\mathbb{P} =0.\\
\end{split}
\end{equation}
Passing to the two-scale limit in each term of (\ref{Variation3}) gives
\begin{equation}\label{LimitEquation}
\begin{split}
&i\int_{\Omega}\int_D\int_{\mathbb{T}}\psi_nv^0\bar{\psi}_n\bar{\phi}(t=0)dydxd\mathbb{P}-i\int_{\Omega}\int_0^T\int_D\int_{\mathbb{T}}\psi_nv\bar{\psi}_n\frac{\partial\bar{\phi}}{\partial t}dydxdtd\mathbb{P}\\
&-\int_{\Omega}\int_0^T\int_D\int_{\mathbb{T}}\sigma\psi_nv\frac{\partial^2\bar{\phi}}{\partial x^2}\cdot \bar{\psi}_ndydxdtd\mathbb{P}\\
&-\int_{\Omega}\int_0^T\int_D\int_{\mathbb{T}}\sigma\psi_nv\frac{\partial^2\bar{\phi}}{\partial x^2}\cdot(\frac{\partial}{\partial x}-2i\pi\theta^n)\bar{\zeta}dydxdtd\mathbb{P}\\
&+\int_{\Omega}\int_0^T\int_D\int_{\mathbb{T}}\sigma\bar{\zeta}(\frac{\partial}{\partial x}+2i\pi\theta^n)\psi_nv\cdot\frac{\partial^2\bar{\phi}}{\partial x^2}dydxdtd\mathbb{P}\\
&+\int_{\Omega}\int_0^T\int_D\int_{\mathbb{T}}d(x, y)\psi_nv\bar{\psi}_n\bar{\phi}dydxdtd\mathbb{P}\\
&+\int_{\Omega}\int_0^T\int_D\int_{\mathbb{T}}\tilde{g}(y)\psi_n\bar{\psi}_n\bar{\phi}dydxdW(t)d\mathbb{P} =0.\\
\end{split}
\end{equation}
Recalling the normalization $\int_{\mathbb{T}}|\psi_n|^2dy=1$, and introducing
\begin{equation}\label{Sigma}
\sigma_n^*=\int_{\mathbb{T}}(\sigma\psi_n\bar{\psi}_n+\sigma\psi_n(\frac{\partial}{\partial y}-2i\pi\theta^n)\bar{\zeta}-\sigma\bar{\zeta}(\frac{\partial}{\partial y}+2i\pi\theta^n)\psi_n)dy,
\end{equation}
and $d^*_n(x)=\int_{\mathbb{T}}d(x, y)|\psi_n(y)|^2dy$, $g_n^*=\int_{\mathbb{T}}\tilde{g}(y)|\psi_n(y)|^2dy$, equation (\ref{LimitEquation}) is equivalent to
\begin{equation}
\begin{split}
&i\int_{\Omega}\int_Dv^0\bar{\phi}dxd\mathbb{P}-i\int_{\Omega}\int_0^T\int_Dv\frac{\partial\bar{\phi}}{\partial t}dxdtd\mathbb{P}-\int_{\Omega}\int_0^T\int_D\sigma_n^*v\cdot\frac{\partial^2\bar{\phi}}{\partial x^2}dxdtd\mathbb{P}\\
&+\int_{\Omega}\int_0^T\int_Dd^*(x)v\bar{\phi}dxdtd\mathbb{P}+\int_{\Omega}\int_0^T\int_Dg_n^*\bar{\phi}dxdW(t)d\mathbb{P}=0,\\
\end{split}
\end{equation}
which is a very weak form of the homogenized equation (\ref{Homo.1}). The compatibility condition of (\ref{SolutionChi}) for the second derivative of $\psi_n$ shows that the $\sigma_n^*$, defined by (\ref{Sigma}), is indeed equal to $\frac{1}{8\pi^2}\frac{\partial^2\lambda_n(\theta^n)}{\partial \theta^2}$, 

This completes the proof.
\end{proof}

\begin{remark}
In the context of quantum mechanics or solid state physics Theorem 1 is called an effective mass theorem \cite{My90}. More precisely, the $(\sigma_n^*)^{-1}$ is the effective mass of an electron in the $n^{th}$ band of a periodic crystal (characterized by the periodic metric $\sigma(y)$ and the periodic potential $c(y)$).
\end{remark}

\section{Homogenization with Multiplicative Noise}
We now study the homogenization of the following Schr\"odinger equation with multiplicative white noise
\begin{equation}\label{Equ.3}
\begin{cases}
$$i\frac{\partial u_{\varepsilon}}{\partial t} - \frac{\partial}{\partial x}(\sigma(\frac{x}{\varepsilon})\frac{\partial u_{\varepsilon}}{\partial x} )+(\varepsilon^{-2}c(\frac{x}{\varepsilon})+d(x,\frac{x}{\varepsilon}))u_{\varepsilon}+g(\frac{x}{\varepsilon})u_{\varepsilon}\frac{dW(t)}{dt}=0  \quad \text{in} \ D\times [0,T], \\
u_\varepsilon=0 \quad \text{on} \ \partial D\times [0,T], \\
u_\varepsilon(0,x)=u^0_{\varepsilon}(x)  \quad \text{in} \ D.$$
\end{cases}
\end{equation}

{\bf Hypothesis  H.5. } We assume that the function $g$ is real measurable bounded periodic function.

Transforming into a pathwise problem, exploiting the absence of noise and using Galerkin approximations and compact embedding results, we can obtain a priori estimates, existence and uniqueness of the variational solution of stochastic linear Schr\"odinger equation with multiplicative white noise. For the following lemma, see \cite{Ke15}.
\begin{lemma}
Assume (H.1., H.2., H.5.). For every $\varepsilon>0$, $u^0_{\varepsilon}\in H^1(D)$, and $T>0$, there exists a unique variational solution $u_{\varepsilon}\in L^2(\Omega;C([0,T]);L^2(D))\bigcap L^2(\Omega\times[0,T];H^1(D))$ of stochastic Schr\"odinger equation (\ref{Equ.3}) in the following sense:
\begin{equation}
\begin{split}
(u_{\varepsilon},v)&=(u^0_{\varepsilon},v)-i\int_0^t(\sigma(\frac{x}{\varepsilon})\frac{\partial u_{\varepsilon}}{\partial x}, \frac{\partial v}{\partial x})ds+i\int_0^t\left((\varepsilon^{-2}c(\frac{x}{\varepsilon})+d(x,\frac{x}{\varepsilon}))u_{\varepsilon},v\right)ds\\
&+i\int_0^t(g(\frac{x}{\varepsilon})u_{\varepsilon}, v)dW(s),
\end{split}
\end{equation}
for a.e. $\omega\in\Omega$, all $t\in[0,T]$ and for all $v\in H^1(D)$. Moreover, there exists a constant $C_T$ that depends on $T$ such that
\begin{equation}
\mathbb{E}(\sup_{0\leqslant t\leqslant T}\|u_{\varepsilon}\|^2+\varepsilon^2\int_0^T\|u_{\varepsilon}\|^2_{H^1}dt)<C_T.
\end{equation}
\end{lemma}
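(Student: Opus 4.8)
The plan is to follow the route indicated just before the statement (and carried out in \cite{Ke15}): transform \eqref{Equ.3} into a pathwise problem, solve it by a Galerkin method, derive the a priori bound, pass to the limit, and undo the transformation. First I would remove the stochastic integral by the random change of unknown $u_\varepsilon(t,x)=\Gamma_\varepsilon(t,x)\,v_\varepsilon(t,x)$, where $\Gamma_\varepsilon(t,x)=\exp\big(i\,g(x/\varepsilon)W(t)+\tfrac{1}{2}g(x/\varepsilon)^2 t\big)$ is chosen so that It\^o's formula absorbs exactly the term $g(x/\varepsilon)u_\varepsilon\,dW$. Since $\Gamma_\varepsilon$ and $\Gamma_\varepsilon^{-1}$ are, for a.e.\ $\omega$, bounded multiplication operators on $L^2(D)$ (and on $H^1_0(D)$ once $g$ is smooth), the two formulations are equivalent, so it is enough to treat the transformed equation, which for each fixed $\omega$ is a linear Schr\"odinger equation with bounded, $\omega$-dependent coefficients and \emph{no} noise.

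For each fixed $\omega$ I would solve this deterministic equation by a Galerkin scheme: project onto the span of the first $m$ Dirichlet eigenfunctions of $-\partial_x^2$ on $D$, solve the resulting linear ODE system (globally, by linearity), and let $m\to\infty$. Passing to the limit is routine because the equation is linear in $u_\varepsilon$; the weak compactness furnished by the bound below suffices to identify each term of the variational formulation, the compactness of the embedding $H^1_0(D)\hookrightarrow L^2(D)$ being used only for the continuity in time with values in $L^2(D)$ and for the attainment of the initial datum. Uniqueness follows at once from linearity: the difference of two variational solutions solves the same equation with zero data, and the $L^2$ identity below forces it to vanish.

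The a priori bound is the point that needs care, and it is cleanest to prove it directly on the Galerkin approximations $u_\varepsilon^m=P_m u_\varepsilon$ of the \emph{stochastic} equation \eqref{Equ.3}. Applying It\^o's formula to $\|u_\varepsilon^m\|^2$, the drift generated by $-\partial_x(\sigma(x/\varepsilon)\partial_x\cdot)+\varepsilon^{-2}c(x/\varepsilon)+d(x,x/\varepsilon)$ has vanishing real part (it is $i$ times a real self-adjoint operator, hence skew-adjoint), the real part of the martingale term vanishes because $g$ is real, and what is left is the It\^o correction $\int_D g(x/\varepsilon)^2|u_\varepsilon^m|^2\,dx\le\|g\|_{L^\infty(\mathbb{T})}^2\|u_\varepsilon^m\|^2$; Gronwall, together with a Burkholder--Davis--Gundy estimate if a martingale term is retained, gives $\mathbb{E}\sup_{[0,T]}\|u_\varepsilon^m\|^2\le C_T\|u_\varepsilon^0\|^2$. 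For the $\varepsilon^2 H^1$-part I would apply It\^o's formula to the energy $E_\varepsilon(u_\varepsilon^m)=\int_D\sigma(x/\varepsilon)|\partial_x u_\varepsilon^m|^2\,dx+\int_D(\varepsilon^{-2}c(x/\varepsilon)+d(x,x/\varepsilon))|u_\varepsilon^m|^2\,dx$: the leading-order drift cancels again, while the It\^o correction and the martingale contribution are bounded by $C(1+\varepsilon^{-2})\|u_\varepsilon^m\|^2+C\,E_\varepsilon(u_\varepsilon^m)+(\text{martingale})$. Using Hypothesis~H.2 one has $\nu\,\varepsilon^2\|\partial_x u_\varepsilon^m\|^2\le\varepsilon^2 E_\varepsilon(u_\varepsilon^m)+(\|c\|_{L^\infty}+\varepsilon^2\|d\|_{L^\infty})\|u_\varepsilon^m\|^2$ and $\varepsilon^2 E_\varepsilon(u_\varepsilon^0)\le C(\varepsilon^2\|u_\varepsilon^0\|_{H^1}^2+\|u_\varepsilon^0\|^2)$; combining these with the $L^2$-bound, multiplying by $\varepsilon^2$, integrating in time and taking expectations, Gronwall yields $\mathbb{E}\big(\sup_{[0,T]}\|u_\varepsilon^m\|^2+\varepsilon^2\int_0^T\|u_\varepsilon^m\|^2_{H^1}\,dt\big)\le C_T$ uniformly in $m$, with $C_T$ depending on $T$, the coefficient bounds, $\nu$, $\|g\|_{L^\infty}$, and on $\|u_\varepsilon^0\|$ and $\varepsilon\|u_\varepsilon^0\|_{H^1}$ (which are $O(1)$ for the well-prepared data used in Theorem~2). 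Lower semicontinuity of the norms transfers the bound to $u_\varepsilon$, and it is preserved under the change of unknown because $\Gamma_\varepsilon$ is unitary up to the deterministic factor $\exp(\tfrac{1}{2}g^2 t)$.

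The step I expect to be the main obstacle is this $\varepsilon^2 H^1$-estimate. One must apply It\^o's formula to $E_\varepsilon$ correctly, identify and bound the correction and martingale terms that arise from the non-commutation of multiplication by $g(x/\varepsilon)$ with $\partial_x(\sigma(x/\varepsilon)\partial_x\cdot)$ — these bring in $\partial_x g(x/\varepsilon)$, which is why one needs $g$ a little better than $L^\infty$ (say $g\in W^{1,\infty}(\mathbb{T})$) or a separate approximation argument in $g$ — while at the same time keeping precise track of the powers of $\varepsilon$, so that the terms carrying $\varepsilon^{-2}$ are exactly the ones neutralized by the prefactor $\varepsilon^2$ and the final constant $C_T$ does not blow up as $\varepsilon\to 0$.
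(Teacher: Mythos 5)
Your proposal follows exactly the route the paper itself indicates for this lemma (which it does not prove but attributes to Keller--Lisei \cite{Ke15}): the pathwise transformation $u_\varepsilon=\Gamma_\varepsilon v_\varepsilon$ removing the multiplicative noise, Galerkin approximation with compactness, It\^o/Gronwall estimates for the $L^2$ and weighted $H^1$ bounds, and uniqueness by linearity. The argument is correct, and your remark that the energy estimate generates $\partial_x g(x/\varepsilon)=\varepsilon^{-1}g'(x/\varepsilon)$ terms --- so that mere boundedness of $g$ as in Hypothesis H.5 is not obviously enough without an approximation step --- is a legitimate caveat that the paper glosses over by citation.
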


\begin{theorem}
Assume (H.1.-H.3., H.5.) and that the initial data $u_{\varepsilon}^0\in H^1(D)$ is of the form
\begin{equation}
u_{\varepsilon}^0(x)=\psi_n(\frac{x}{\varepsilon},\theta^n)e^{2i\pi\frac{\theta^n\cdot x}{\varepsilon}}v^0(x),
\end{equation}
with $v^0\in H^1(\mathbb{R})$. The solution of (1) can be written as
\begin{equation}
u_{\varepsilon}(t,x)=e^{i\frac{\lambda_n(\theta^n)t}{\varepsilon^2}}e^{2i\pi\frac{\theta^n\cdot x}{\varepsilon}}v_{\varepsilon}(t, x),
\end{equation}
where $v_{\varepsilon}$ two-scale converges  to $\psi_n(y,\theta^n)v(t, x)$, uniformly on compact time intervals in $\mathbb{R}^+$, and $v$ is the unique
solution of the homogenized Schr\"odinger equation
\begin{equation}\label{Homo.2}
\begin{cases}
$$i\frac{\partial v}{\partial t} - \frac{\partial}{\partial x}(\sigma_n^*\frac{\partial v}{\partial x})+d_n^*(x)v+g_n^*v\frac{dW(t)}{dt}=0  \quad \text{in} \ D\times [0,T], \\
v=0 \quad \text{on} \ \partial D\times [0,T], \\
v(0,x)=v^0(x)  \quad \text{in} \ D.$$
\end{cases}
\end{equation}
with $\sigma_n^*=\frac{1}{8\pi^2}\frac{\partial^2\lambda_n(\theta^n)}{\partial\theta^2}$, $d^*_n(x)=\int_{\mathbb{T}}d(x, y)|\psi_n(y)|^2dy$, and $g_n^*=\int_{\mathbb{T}}g(y)|\psi_n(y)|^2dy$.
\end{theorem}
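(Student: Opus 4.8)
The plan is to mimic the proof of Theorem 1, making the modifications required by the multiplicative structure of the noise term $g(\tfrac{x}{\varepsilon})u_\varepsilon\,dW/dt$. First I would set $v_\varepsilon(t,x)=u_\varepsilon(t,x)e^{-i\lambda_n(\theta^n)t/\varepsilon^2}e^{-2i\pi\theta^n\cdot x/\varepsilon}$; since $|v_\varepsilon|=|u_\varepsilon|$, Lemma 2 gives the uniform bound $\mathbb{E}\big(\sup_{0\le t\le T}\|v_\varepsilon\|^2+\varepsilon^2\int_0^T\|v_\varepsilon\|_{H^1}^2\,dt\big)<C_T$, so Proposition 2 yields (along a subsequence) a two-scale limit $v^*(\omega,t,x,y)\in L^2(\Omega\times[0,T]\times D;H^1(\mathbb{T}))$ with $\varepsilon\partial_x v_\varepsilon\xrightarrow{2-s}\partial_y v^*$, and Proposition 1 applied to the well-prepared initial data gives $v_\varepsilon(0,x)\xrightarrow{2-s}\psi_n(y,\theta^n)v^0(x)$.

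The first step is identical to Theorem 1: multiply (\ref{Equ.3}) by the complex conjugate of $\varepsilon^2\phi(\omega,t,x,\tfrac{x}{\varepsilon})e^{i\lambda_n(\theta^n)t/\varepsilon^2}e^{2i\pi\theta^n\cdot x/\varepsilon}$ with $\phi$ smooth and compactly supported in $[0,T]\times D$, integrate over $\Omega$ and $t$, and pass to the two-scale limit. The stochastic term now carries an extra factor $\varepsilon^2 g^\varepsilon v_\varepsilon\bar\phi^\varepsilon$, which vanishes in the limit by the uniform $L^2$ bound on $v_\varepsilon$ and the boundedness of $g$; hence the limit equation is unchanged, $\mathbb{A}_n(\theta^n)v^*=0$ in $\mathbb{T}$ for a.e. $\omega$, and simplicity of $\lambda_n(\theta^n)$ forces $v^*(\omega,t,x,y)=v(\omega,t,x)\psi_n(y,\theta^n)$ for some scalar $v\in L^2(\Omega\times[0,T]\times D)$.

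The second step again uses the corrector test function $\Psi_\varepsilon=e^{i\lambda_n(\theta^n)t/\varepsilon^2}e^{2i\pi\theta^n\cdot x/\varepsilon}\big(\psi_n^\varepsilon\phi+\varepsilon\,\partial_x\phi\,\zeta^\varepsilon\big)$ with $\zeta$ the solution of (\ref{SolutionZeta}). Multiplying (\ref{Equ.3}) by $\bar\Psi_\varepsilon$, the algebraic identity (\ref{Div}) together with the definitions (\ref{PsiDef.}) of $\psi_n$ and (\ref{ZetaDef.}) of $\zeta$ eliminates the singular $\varepsilon^{-2}$ and $\varepsilon^{-1}$ contributions exactly as before, leaving a formulation in which every term but the stochastic one is handled verbatim as in Theorem 1, producing the limit terms with $\sigma_n^*$ given by (\ref{Sigma}) and $d_n^*(x)=\int_{\mathbb{T}}d(x,y)|\psi_n(y)|^2\,dy$. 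The only genuinely new term is $\int_\Omega\int_0^T\int_D g^\varepsilon v_\varepsilon\bar\Psi_\varepsilon\,dx\,dW(t)\,d\mathbb{P}$, which after writing $\bar\Psi_\varepsilon=e^{-i\lambda_n(\theta^n)t/\varepsilon^2}e^{-2i\pi\theta^n\cdot x/\varepsilon}(\bar\psi_n^\varepsilon\bar\phi+\varepsilon\,\partial_x\bar\phi\,\bar\zeta^\varepsilon)$ reduces to $\int_\Omega\int_0^T\int_D g^\varepsilon v_\varepsilon(\bar\psi_n^\varepsilon\bar\phi+\varepsilon\,\partial_x\bar\phi\,\bar\zeta^\varepsilon)\,dx\,dW(t)\,d\mathbb{P}$. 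This is the main obstacle: passing to the two-scale limit inside a stochastic integral. The $\varepsilon$-order piece is dropped by Cauchy--Schwarz and the uniform bounds. For the leading piece, I would use that $g(y)\bar\psi_n(y,\theta^n)\bar\phi(\omega,t,x)\in L^2(\Omega\times[0,T]\times D;C_\#(\mathbb{T}))$ is an admissible two-scale test function and that $v_\varepsilon\xrightarrow{2-s}v\psi_n$; since the Wiener integral is linear and continuous on $L^2(\Omega\times[0,T];L^2(D))$ (It\^o isometry), two-scale convergence of $v_\varepsilon$ against the oscillating test function $g^\varepsilon\bar\psi_n^\varepsilon\bar\phi$ transfers through the stochastic integral, giving the limit $\int_\Omega\int_0^T\int_D\int_{\mathbb{T}}g(y)v\psi_n\bar\psi_n\bar\phi\,dy\,dx\,dW(t)\,d\mathbb{P}=\int_\Omega\int_0^T\int_D g_n^* v\bar\phi\,dx\,dW(t)\,d\mathbb{P}$ with $g_n^*=\int_{\mathbb{T}}g(y)|\psi_n(y)|^2\,dy$. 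Collecting all terms yields the very weak form of (\ref{Homo.2}); the compatibility condition for (\ref{SolutionChi}) identifies $\sigma_n^*=\tfrac{1}{8\pi^2}\partial^2_\theta\lambda_n(\theta^n)$, uniqueness of the variational solution of (\ref{Homo.2}) (Lemma 2 applied on $D$ with the homogenized coefficients) shows the whole sequence converges, and the representation $u_\varepsilon=e^{i\lambda_n(\theta^n)t/\varepsilon^2}e^{2i\pi\theta^n\cdot x/\varepsilon}v_\varepsilon$ completes the proof.
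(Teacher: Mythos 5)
Your proposal follows essentially the same route as the paper: the same substitution $v_\varepsilon=u_\varepsilon e^{-i\lambda_n(\theta^n)t/\varepsilon^2}e^{-2i\pi\theta^n\cdot x/\varepsilon}$, the same two-step scheme with the test functions $\varepsilon^2\phi^\varepsilon e^{i\lambda_n(\theta^n)t/\varepsilon^2}e^{2i\pi\theta^n\cdot x/\varepsilon}$ and $\Psi_\varepsilon$, the same cancellations via (\ref{PsiDef.})--(\ref{ZetaDef.}), and the same identification of $\sigma_n^*$ through the compatibility condition of (\ref{SolutionChi}). In fact you supply more detail than the paper does on the one genuinely new point (passing to the two-scale limit inside the stochastic integral via the It\^o isometry), which the paper's proof leaves implicit with the phrase ``passing to the two-scale limit in each term.''
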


\begin{proof}
The proof is similar to that of Theorem 1. First of all, by Lemma 2, the same convergence of (\ref{Pro.2}) and (\ref{Pro.1}) hold. As in the first step of the proof of Theorem 1, we obtain
\begin{equation}
\begin{split}
i\varepsilon^2&\int_{\Omega}\int_Du^0_{\varepsilon}\bar{\phi}^{\varepsilon}e^{-2i\pi\frac{\theta^n\cdot x}{\varepsilon}}dxd\mathbb{P}-i\varepsilon^2\int_{\Omega}\int_0^T\int_Dv_{\varepsilon}\frac{\partial\bar{\phi}^{\varepsilon}}{\partial t}dxdtd\mathbb{P}\\
&+\int_{\Omega}\int_0^T\int_D\sigma^{\varepsilon}(\varepsilon\frac{\partial}{\partial x}+2i\pi\theta^n)v_{\varepsilon}\cdot(\varepsilon\frac{\partial}{\partial x}-2i\pi\theta^n)\bar{\phi}^{\varepsilon}dxdtd\mathbb{P}\\
&+\int_{\Omega}\int_0^T\int_D(c^{\varepsilon}-\lambda_n(\theta^n)+\varepsilon^2d^{\varepsilon})v_{\varepsilon}\bar{\phi}^{\varepsilon}dxdtd\mathbb{P}=0.\\
&+\int_{\Omega}\int_0^T\int_D\varepsilon^2g^{\varepsilon}v_{\varepsilon}\bar{\phi}^{\varepsilon}dxdW(t)d\mathbb{P}=0.\\
\end{split}
\end{equation}

Similarly, we obtain that the sequence
\begin{equation}
v_{\varepsilon}(\omega,t,x)=u_{\varepsilon}(\omega, t, x)e^{-i\frac{\lambda_n(\theta^n)t}{\varepsilon^2}}e^{-2i\pi\frac{\theta^n\cdot x}{\varepsilon}}\xrightarrow{2-s}v(\omega, t, x)\psi_n(y, \theta^n) \quad \text{in} \  \Omega\times[0,T]\times D,\\
\end{equation}
then, in a second step we multiply (\ref{Equ.3}) by the complex conjugate of (\ref{PsiConj.}). After integration by parts and some algebra similar to that in the proof of Theorem 1, we obtain
\begin{equation}\label{Variation4}
\begin{split}
&i\int_{\Omega}\int_Du^0_{\varepsilon}\bar{\Psi}_{\varepsilon}(t=0)dxd\mathbb{P}-i\int_{\Omega}\int_0^T\int_Dv_{\varepsilon}(\bar{\psi}_n^{\varepsilon}\frac{\partial\bar{\phi}}{\partial t}+\varepsilon\frac{\partial^2\bar{\phi}}{\partial x\partial t}\bar{\zeta}^{\varepsilon})dxdtd\mathbb{P}\\
&-\int_{\Omega}\int_0^T\int_D\sigma^{\varepsilon}v_{\varepsilon}\frac{\partial^2\bar{\phi}}{\partial x^2}\cdot \bar{\psi}^{\varepsilon}_ndxdtd\mathbb{P}\\
&-\int_{\Omega}\int_0^T\int_D\sigma^{\varepsilon}v_{\varepsilon}\frac{\partial^2\bar{\phi}}{\partial x^2}\cdot(\varepsilon\frac{\partial}{\partial x}-2i\pi\theta^n)\bar{\zeta}^{\varepsilon}dxdtd\mathbb{P}\\
&+\int_{\Omega}\int_0^T\int_D\sigma^{\varepsilon}\bar{\zeta}^{\varepsilon}(\varepsilon\frac{\partial}{\partial x}+2i\pi\theta^n)v_{\varepsilon}\cdot\frac{\partial^2\bar{\phi}}{\partial x^2}dxdtd\mathbb{P}\\
&+\int_{\Omega}\int_0^T\int_Dd^{\varepsilon}v_{\varepsilon}(\bar{\psi}_n^{\varepsilon}\bar{\phi}+\varepsilon\frac{\partial\bar{\phi}}{\partial x}\bar{\zeta}^{\varepsilon})dxdtd\mathbb{P}\\
&+\int_{\Omega}\int_0^T\int_Dg^{\varepsilon}v_{\varepsilon}(\bar{\psi}_n^{\varepsilon}\bar{\phi}+\varepsilon\frac{\partial\bar{\phi}}{\partial x}\bar{\zeta}^{\varepsilon})dxdW(t)d\mathbb{P} =0.\\
\end{split}
\end{equation}
Passing to the two-scale limit in each term of (\ref{Variation4}) gives a variational formulation of (\ref{Homo.2}).
\end{proof}
This completes the proof.

\section{Outlook}
There are many possibilities to deepen these deliberations. One research perspective is to regard the problem over an unbounded domain or even a multi-dimensional one. Furthermore, an encouraging outlook is the discussion of more general nonlinear power-terms.

\section{References}

\end{document}